\newtheorem{theorem}{Theorem}[section]
\newtheorem{corollary}{Corollary}[section]
\newtheorem{example}{Example}[section]
\newtheorem{lemma}{Lemma}[section]
\newtheorem{remark}{Remark}[section]
\newenvironment{proof}[1][Proof]{\noindent\textbf{#1.} }{\ \rule{0.5em}{0.5em}}
\begin{document}

\title{\v{S}migoc's glue for universal realizability in the left half-plane%
\thanks{%
Supported by Universidad Cat\'{o}lica del Norte-VRIDT 036-2020, N\'{u}cleo 6
UCN VRIDT 083-2020, Chile.}}
\author{Jaime H. Alfaro, Ricardo L. Soto\thanks{%
E-mail addresses: rsoto@ucn.cl (R. L. Soto), jaime.alfaro@ucn.cl (J. H.
Alfaro).} \\
{\small Dpto. Matem\'{a}ticas, Universidad Cat\'{o}lica del Norte, Casilla
1280}\\
{\small Antofagasta, Chile.}}
\date{}
\maketitle

\begin{abstract}
A list $\Lambda =\{\lambda _{1},\lambda _{2},\ldots ,\lambda _{n}\}$ of
complex numbers is said to be \textit{realizable} if it is the spectrum of a
nonnegative matrix. $\Lambda $ is said to be \textit{universally realizable}
($\mathcal{UR}$) if it is realizable for each possible Jordan canonical form
allowed by $\Lambda .$ In this paper, using companion matrices and applying
a procedure by \v{S}migoc, is provides a sufficient condition for the 
\textit{universal realizability} of left half-plane spectra, that is, $%
\Lambda =\{\lambda _{1},\ldots ,\lambda _{n}\}$ with $\lambda _{1}>0,$ $%
\func{Re}\lambda _{i}\leq 0,$ $i=2,\ldots ,n.$ It is also shown how the
effect of adding a negative real number to a not $\mathcal{UR}$ left
half-plane list of complex numbers, makes the new list $\mathcal{UR}$, and a
family of left half-plane lists that are $\mathcal{UR}$ is characterized.
\end{abstract}

\textit{AMS classification: \ \ 15A18, 15A20, 15A29}

\textit{Key words: Nonnegative matrix; companion matrix; Universal
realizability; \v{S}migoc's glue.}

\section{Introduction}

\noindent A list $\Lambda =\{\lambda _{1},\lambda _{2},\ldots ,\lambda
_{n}\} $ of complex numbers is said to be \textit{realizable} if it is the
spectrum of an $n$-by-$n$ nonnegative matrix $A$, and $A$ is said to be a 
\textit{realizing} \textit{matrix} for\textit{\ }$\Lambda .$ The problem of
the realizability of spectra is called the \textit{nonnegative inverse
eigenvalue problem }(NIEP). From the Perron-Frobenius Theorem we know that
if $\Lambda =\{\lambda _{1},\lambda _{2},\ldots ,\lambda _{n}\}$ is the
spectrum of an $n$-by-$n$ nonnegative matrix $A,$ then the leading
eigenvalue of $A$ equals to the spectral radius of $A,$ $\rho
(A)=:\max\limits_{1\leq i\leq n}\left\vert \lambda _{i}\right\vert .$ This
eigenvalue is called the \textit{Perron eigenvalue,} and we shall assume in
this paper, that $\rho (A)=\lambda _{1}.$

\bigskip

A matrix is said to have \textit{constant row sums, }if each one of its rows
sums up to the same constant $\alpha .$ The set of all matrices with
constant row sums equal to $\alpha ,$ is denoted by $\mathcal{CS}_{\alpha }.$%
\emph{\ }Then, any matrix $A\in \mathcal{CS}_{\alpha }$ has the eigenvector $%
\mathbf{e}^{T}=[1,1,\ldots ,1],$ corresponding to the eigenvalue $\alpha .$
The real matrices with constant row sums are important because it is known
that the problem of finding a nonnegative matrix with spectrum $\Lambda
=\{\lambda _{1},\ldots ,\lambda _{n}\},$ is equivalent to the problem of
finding a nonnegative matrix in \emph{\ }$\mathcal{CS}_{\lambda _{1}}$ with
spectrum $\Lambda $ (see \cite{Johnson1}). We denote by $\mathbf{e}_{k},$
the n-dimensional vector, with $1$ in the $k^{th}$ position and zeros
elsewhere. If $\Lambda =\{\lambda _{1},\ldots ,\lambda _{n}\},$ then $%
s_{k}(\Lambda )=\dsum\limits_{i=1}^{n}\lambda _{i}^{k},$ $k=1,2,\ldots .$

\bigskip

A list $\Lambda =\{\lambda _{1},\lambda _{2},\ldots ,\lambda _{n}\}$ of
complex numbers, is said to be \textit{diagonalizably realizable }($\mathcal{%
DR}$)\textit{,} if there is a diagonalizable realizing matrix for $\Lambda $
\ The list $\Lambda $ is said to be \textit{universally realizable }($%
\mathcal{UR}$), if it is realizable for each possible Jordan canonical form
(JCF) allowed by $\Lambda .$ The problem of the universal realizability of
spectra, is called the \textit{universal realizability problem }(URP). The
URP contains the NIEP, and both problems are equivalent if the given numbers 
$\lambda _{1},\lambda _{2},\ldots ,\lambda _{n}$ are distinct. In terms of $%
n,$ both problems remain unsolved for $n\geq 5.$ It is clear that if $%
\Lambda $ is $\mathcal{UR}$, then $\Lambda $ must be $\mathcal{DR}$. The
first known results on the URP are due to Minc \cite{Minc1, Minc2}. In terms
of the URP, Minc \cite{Minc1} showed that if a list $\Lambda =\{\lambda
_{1},\lambda _{2},\ldots ,\lambda _{n}\}$ of complex numbers is the spectrum
of a diagonalizable positive matrix, then $\Lambda $ is $\mathcal{UR}$. The
positivity condition is necessary for Minc's proof, and the question set by
Minc himself, whether the result holds for nonnegative realizations was open
for almost 40 years. Recently, two extensions of Minc's result have been
obtained in \cite{Collao, Johnson4}. In \cite{Collao}, Collao et al. showed
that a nonnegative matrix $A\in \mathcal{CS}_{\lambda _{1}},$ with a
positive column, is similar to a positive matrix. Note that if $A$ is
nonnegative with a positive row and $A^{T}$ has a positive eigenvector, then 
$A^{T}$ is also similar to a positive matrix. Besides, if $\Lambda $ is
diagonalizably realizable by a matrix $A\in \mathcal{CS}_{\lambda _{1}}$
having a positive column, then $\Lambda $ is $\mathcal{UR}$. In \cite%
{Johnson4}, Johnson et al. introduced the concept of ODP matrices, that is,
nonnegative matrices with all positive off-diagonal entries (zero diagonal
entries are permitted) and proved that if $\Lambda $ is diagonalizably ODP
realizable, then $\Lambda $ is $\mathcal{UR}$. Note that both extensions
contain, as a particular case, Minc's result in \cite{Minc1}. Both
extensions allow us to significantly increase the set of spectra that can be
proved to be $\mathcal{UR}$, as for instance, certain spectra $\Lambda
=\{\lambda _{1},\ldots ,\lambda _{n}\}$ with $s_{1}(\Lambda )=0,$ which is
not possible from Minc's result. In particular, we shall use the extension
in \cite{Collao} to generate some of our results.

\begin{remark}
In \cite{Collao}, Section $2,$ Theorem $2.1$ and Corollary $2.1,$ there is
an error in assuming that if $A$ is nonnegative with a positive row, then $%
A^{T},$ which has a positive column, is similar to a positive matrix. The
reason is that we cannot guarantee that $A^{T}$ has a positive eigenvector.
\end{remark}

Regarding non-positive universal realizations, we mention that in \cite%
{Soto, Diaz} the authors proved, respectively, that lists of complex numbers 
$\Lambda =\{\lambda _{1},\ldots ,\lambda _{n}\},$ of Suleimanova type, that
is,%
\begin{equation*}
\lambda _{1}>0,\text{ }\func{Re}\lambda _{i}\leq 0,\text{ }\left\vert \func{%
Re}\lambda _{i}\right\vert \geq \left\vert \func{Im}\lambda _{i}\right\vert ,%
\text{ }i=2,3,\ldots ,n\text{,}
\end{equation*}%
or of \v{S}migoc type, that is, 
\begin{equation}
\lambda _{1}>0,\text{ }\func{Re}\lambda _{i}\leq 0,\text{ }\sqrt{3}%
\left\vert \func{Re}\lambda _{i}\right\vert \geq \left\vert \func{Im}\lambda
_{i}\right\vert ,\text{ }i=2,3,\ldots ,n\text{,}  \label{Smi}
\end{equation}%
are $\mathcal{UR}$ if and only if they are realizable if and only if $%
\dsum\limits_{i=1}^{n}\lambda _{i}\geq 0.$

Outline of the paper:\textbf{\ }The paper is organized as follows: In
Section $2,$ we present the mathematical tools that will be used to generate
our results. In Section $3,$ we study the URP for a left half-plane list and
we give a sufficient condition for it to be $\mathcal{UR}$. In Section $4,$
we discuss the effect of adding a negative real number $-c$ to a left
half-plane list $\Lambda =\{\lambda _{1},-a\pm bi,\ldots ,-a\pm bi\}$, which
is not $\mathcal{UR}$ (or even not realizable), or we do not know whether it
is, and we show how $\Lambda \cup \{-c\}$ becomes $\mathcal{UR}$. We also
characterize a family of left half-plane lists that are $\mathcal{UR}$. In
Section $5,$ we show that the merge of two lists diagonalizably realizable $%
\Gamma _{1}\in \emph{CS}_{\lambda _{1}}$ and $\Gamma _{2}\in \emph{CS}_{\mu
_{1}}$ is $\mathcal{UR}$. Examples are shown to illustrate the results.

\section{Preliminaries}

\noindent Throughout this paper we use the following results: The first one,
by \v{S}migoc \cite{Smigoc}, gives a procedure that we call \v{S}migoc's
glue technique, to obtain from two matrices $A$ and $B$ of size $n$-by-$n$
and $m$-by-$m,$ respectively, a new $(n+m-1)$-by-$(n+m-1)$ matrix $C,$
preserving in certain way, the corresponding JCFs of $A$ and $B.$ The second
one, by Laffey and \v{S}migoc \cite{Laffey} solves the NIEP for lists of
complex numbers on the left half-plane, that is, lists with $\lambda _{1}>0,$
$\func{Re}\lambda _{i}\leq 0,$ $i=2,\ldots ,n.$ Moreover, we also use Lemma $%
5$ in \cite{Laffey}.

\begin{theorem}
\label{Smigoc} \textrm{\cite{Smigoc}} Suppose $B$ is an $m$-by-$m$ matrix
with a JCF that contains at least one $1$-by-$1$ Jordan block corresponding
to the eigenvalue $c$: 
\begin{equation*}
J(B)=\left[ 
\begin{array}{cc}
c & 0 \\ 
0 & I(B)%
\end{array}%
\right] .
\end{equation*}%
Let $\mathbf{t}$ and $\mathbf{s}$, respectively, be the left and the right
eigenvectors of $B$ associated with the $1$-by-$1$ Jordan block in the above
canonical form. Furthermore, we normalize vectors $\mathbf{t}$ and $\mathbf{s%
}$ so that $\mathbf{t}^{\textsuperscript{T}}\mathbf{s}=1.$ Let $J(A)$ be a
JCF for the $n$-by-$n$ matrix 
\begin{equation*}
A=\left[ 
\begin{array}{cc}
A_{1} & \mathbf{a} \\ 
\mathbf{b}^{T} & c%
\end{array}%
\right] ,
\end{equation*}%
where $A_{1}$ is an $(n-1)$-by-$(n-1)$ matrix and $\mathbf{a}$ and $\mathbf{b%
}$ are vectors in $\mathbb{C}^{\textsuperscript{n-1}}.$ Then the matrix 
\begin{equation*}
C=\left[ 
\begin{array}{cc}
A_{1} & \mathbf{at}^{\textsuperscript{T}} \\ 
\mathbf{sb}^{\textsuperscript{T}} & B%
\end{array}%
\right]
\end{equation*}%
has JCF 
\begin{equation*}
J(C)=\left[ 
\begin{array}{cc}
J(A) & 0 \\ 
0 & I(B)%
\end{array}%
\right] .
\end{equation*}
\end{theorem}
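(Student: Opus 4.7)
The plan is to exhibit an explicit block similarity $S$ reducing $C$ to the block-diagonal form $\mathrm{diag}(A, I(B))$; once this is done, conjugating the $A$ block by a similarity taking $A$ to $J(A)$ yields the stated JCF.

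The starting point is a Jordan decomposition $B = Q J(B) Q^{-1}$ compatible with the displayed block structure $J(B) = \mathrm{diag}(c, I(B))$. Since the $1$-by-$1$ block for $c$ sits in the first position, one may take $Q$ whose first column is $\mathbf{s}$ and $Q^{-1}$ whose first row is $\mathbf{t}^{T}$. Writing $Q = [\mathbf{s}\mid Q_{2}]$ and partitioning $Q^{-1}$ conformally with first row $\mathbf{t}^{T}$ and lower rows $P$, the identity $Q^{-1}Q = I_{m}$ together with the normalization $\mathbf{t}^{T}\mathbf{s} = 1$ produces the biorthogonality relations $\mathbf{t}^{T} Q_{2} = 0$, $P\mathbf{s} = 0$, and $P Q_{2} = I_{m-1}$.

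Next I would conjugate $C$ by $S = \mathrm{diag}(I_{n-1}, Q)$. The off-diagonal block $\mathbf{a}\mathbf{t}^{T} Q$ collapses to $[\mathbf{a}\mid 0]$ because $\mathbf{t}^{T} Q = [1\mid 0]$; the off-diagonal block $Q^{-1}\mathbf{s}\mathbf{b}^{T}$ collapses to a column with $\mathbf{b}^{T}$ on top and zero below because $Q^{-1}\mathbf{s} = \mathbf{e}_{1}$; and the $(2,2)$ block becomes $J(B) = \mathrm{diag}(c, I(B))$. Re-reading the resulting four-block layout in its natural $3\times 3$ block form exhibits $S^{-1}CS$ as $\mathrm{diag}(A, I(B))$ directly, with no permutation needed. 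A further conjugation by $\mathrm{diag}(P_{A}, I)$ with $P_{A}^{-1} A P_{A} = J(A)$ then delivers $\mathrm{diag}(J(A), I(B))$, which is the claimed JCF of $C$.

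The main point to watch is that $\mathbf{t}$ and $\mathbf{s}$ must be the specific left and right eigenvectors attached to the isolated $1$-by-$1$ Jordan block in $J(B)$, not arbitrary eigenvectors for $c$ (the eigenvalue $c$ might also appear inside $I(B)$). The hypothesis $\mathbf{t}^{T}\mathbf{s} = 1$ is really encoding this choice: it forces the biorthogonal complements of $\mathrm{span}\{\mathbf{s}\}$ and $\mathrm{span}\{\mathbf{t}\}$ to coincide with the $B$-invariant subspace carrying $I(B)$, which is precisely what allows $Q$ and $Q^{-1}$ to be built as described. Once those ingredients are in place, the rest is straightforward block algebra.
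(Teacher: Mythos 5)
Your argument is correct. One small clarification first: this theorem is cited in the paper from \v{S}migoc's original article \cite{Smigoc} and is \emph{not} reproved there, so there is no in-paper proof to compare against; I am therefore judging your proposal on its own merits.

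Your similarity reduction is exactly the right mechanism and all the steps check out. Writing $Q=[\mathbf{s}\mid Q_{2}]$ and $Q^{-1}=\left[\begin{smallmatrix}\mathbf{t}^{T}\\ P\end{smallmatrix}\right]$, the identity $Q^{-1}Q=I_{m}$ indeed yields $\mathbf{t}^{T}Q_{2}=0$ and $P\mathbf{s}=0$, so $\mathbf{t}^{T}Q=[1\mid 0]$ and $Q^{-1}\mathbf{s}=\mathbf{e}_{1}$. Conjugation of $C$ by $S=\mathrm{diag}(I_{n-1},Q)$ then produces
\[
S^{-1}CS=\left[\begin{array}{ccc} A_{1} & \mathbf{a} & 0\\ \mathbf{b}^{T} & c & 0 \\ 0 & 0 & I(B)\end{array}\right]=\left[\begin{array}{cc} A & 0\\ 0 & I(B)\end{array}\right],
\]
after which one further block-diagonal conjugation gives $\mathrm{diag}(J(A),I(B))$, a bona fide Jordan form since $J(A)$ and $I(B)$ are each direct sums of Jordan blocks. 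Your closing remark about why $\mathbf{t},\mathbf{s}$ must be tied to the \emph{specific} $1\times1$ block (rather than arbitrary eigenvectors for $c$, which may recur inside $I(B)$) is exactly the point that makes the construction of $Q$ legitimate. The proof is complete and essentially the natural one for this statement.
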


\begin{theorem}
\cite{Laffey} \label{LS}Let $\Lambda =\{\lambda _{1},\lambda _{2},\ldots
,\lambda _{n}\}$ be a list of complex numbers with $\lambda _{1}\geq
\left\vert \lambda _{i}\right\vert $ and $\func{Re}\lambda _{i}\leq 0,$ $%
i=2,\ldots ,n.$ Then $\Lambda $ is realizable if and only if 
\begin{equation*}
s_{1}=s_{1}(\Lambda )\geq 0,\text{ \ }s_{2}=s_{2}(\Lambda )\geq 0,\text{ \ }%
s_{1}^{2}\leq ns_{2}.
\end{equation*}
\end{theorem}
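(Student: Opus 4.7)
The plan is to split the equivalence into the two familiar halves, with necessity essentially a trace computation and sufficiency requiring an explicit construction.

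For the necessity direction, suppose $\Lambda$ is the spectrum of a nonnegative $n \times n$ matrix $A$. Then $s_{1} = \operatorname{tr}(A) = \sum_{i} a_{ii} \geq 0$ and $s_{2} = \operatorname{tr}(A^{2}) \geq 0$, since both $A$ and $A^{2}$ are entrywise nonnegative. The inequality $s_{1}^{2} \leq n s_{2}$ is the Johnson--Loewy--London moment bound; it falls out of Cauchy--Schwarz combined with $(A^{2})_{ii} = \sum_{j} a_{ij} a_{ji} \geq a_{ii}^{2}$:
\[
s_{1}^{2} = \Bigl(\sum_{i} a_{ii}\Bigr)^{2} \leq n \sum_{i} a_{ii}^{2} \leq n \sum_{i} (A^{2})_{ii} = n s_{2}.
\]

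For sufficiency, the approach is to build a nonnegative matrix with the given spectrum. First I would group $\Lambda$ as the Perron value $\lambda_{1}$, real non-positive eigenvalues $-a_{1}, \ldots, -a_{r}$, and conjugate pairs $-\alpha_{j} \pm i\beta_{j}$ with $\alpha_{j} \geq 0$. Each conjugate pair is realized by a $2 \times 2$ companion block with trace $-2\alpha_{j}$ and determinant $\alpha_{j}^{2} + \beta_{j}^{2}$; direct-summing these blocks with a $1 \times 1$ $\lambda_{1}$ block and scalar blocks $-a_{k}$ produces a matrix with the correct spectrum but many negative entries. I would then superimpose a rank-one Brauer-type update on the Perron eigenvector (equivalently, a \v{S}migoc glue against a Perron block) whose free scalar parameter is tuned to cancel the negativities while preserving the non-Perron spectrum. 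The content of the theorem is that the inequalities $s_{1} \geq 0$, $s_{2} \geq 0$, $s_{1}^{2} \leq n s_{2}$ are exactly what is needed for a feasible choice of that parameter.

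The hard part will be converting the three scalar inequalities into a pointwise nonnegativity statement about the corrected matrix. Writing $s_{1}$ and $s_{2}$ explicitly in terms of $\lambda_{1}, \alpha_{j}, \beta_{j}, a_{k}$, the condition $s_{1}^{2} \leq n s_{2}$ becomes a quadratic feasibility inequality governing the admissible size of the rank-one correction, with equality marking the extremal case; the hypothesis $\lambda_{1} \geq |\lambda_{i}|$ is then used to ensure the Perron column can absorb the redistribution without creating fresh negativity elsewhere. The remaining delicate situation is the boundary $s_{1} = 0$ (or $s_{1}^{2} = n s_{2}$), where no slack is left for an interior adjustment; I expect to handle this by invoking Lemma 5 of \cite{Laffey}, which the excerpt singles out as an auxiliary tool and which is tailored to degenerate moment configurations of precisely this type.
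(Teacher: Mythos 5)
This statement is cited from Laffey and \v{S}migoc \cite{Laffey}; the paper does not supply its own proof, so there is nothing internal to compare against. On its own terms, your necessity argument is correct and complete: $s_{1}=\operatorname{tr}(A)\geq 0$ and $s_{2}=\operatorname{tr}(A^{2})\geq 0$ follow from entrywise nonnegativity of $A$ and $A^{2}$, and the chain $\bigl(\sum_{i}a_{ii}\bigr)^{2}\leq n\sum_{i}a_{ii}^{2}\leq n\sum_{i}(A^{2})_{ii}=ns_{2}$ is a clean derivation of the JLL inequality for this case.

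The sufficiency direction, however, is not a proof but an announcement of intent, and several of its load-bearing sentences conceal genuine gaps. You start from a direct sum of a $1\times 1$ Perron block, scalar blocks, and $2\times 2$ companion blocks, then propose to repair the resulting sign pattern with ``a rank-one Brauer-type update on the Perron eigenvector (equivalently, a \v{S}migoc glue against a Perron block).'' These two operations are not equivalent: Brauer's theorem perturbs a single matrix by a rank-one term $\mathbf{v}\mathbf{q}^{T}$ to shift one eigenvalue, whereas the \v{S}migoc glue (Theorem~\ref{Smigoc}) welds two matrices of sizes $n$ and $m$ into one of size $n+m-1$ and preserves Jordan structure; conflating them obscures which construction you actually intend, and neither, applied once, visibly repairs all the off-block negativities you created. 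More fundamentally, you never exhibit how the three scalar inequalities translate into nonnegativity of the resulting matrix; you write that ``the content of the theorem is that the inequalities \ldots\ are exactly what is needed for a feasible choice of that parameter,'' which is a restatement of the claim, not an argument. Finally, your appeal to Lemma~\ref{LS2} is made only at the boundary case, but in the Laffey--\v{S}migoc development that lemma is the engine of the whole sufficiency proof: it reduces nonnegativity of an $n\times n$ companion matrix to nonnegativity of the single coefficient $b_{2}$, after which the moment conditions $s_{1}\geq 0$, $s_{2}\geq 0$, $s_{1}^{2}\leq ns_{2}$ are shown to force $b_{2}\leq 0$ for a suitable shift $\rho=2t-\lambda_{2}-\cdots-\lambda_{n}$, and the glue is then used to assemble the full realization from companion pieces. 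Your outline gestures at the right ingredients but does not assemble them, so sufficiency remains unproved.
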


\begin{lemma}
\cite{Laffey} \label{LS2} Let $t$ be a nonnegative real number and let $%
\lambda _{2},\lambda _{3},\ldots ,\lambda _{n}$ be complex numbers with real
parts less than or equal to zero, such that the list $\{\lambda _{2},\lambda
_{3},\ldots ,\lambda _{n}\}$ is closed under complex conjugation. Set $\rho
=2t-\lambda _{2}-\cdots -\lambda _{n}$ and%
\begin{equation}
f(x)=(x-\rho )\dprod\limits_{j=2}^{n}(x-\lambda
_{j})=x^{n}-2tx^{n-1}+b_{2}x^{n-2}+\cdots +b_{n}.  \label{b2}
\end{equation}%
Then $b_{2}\leq 0$ implies $b_{j}\leq 0$ for $j=3,4,\ldots ,n.$
\end{lemma}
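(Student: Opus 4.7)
The plan is to reformulate the claim $b_k\le 0$ as a coefficient-ratio condition on the polynomial $g(x):=\prod_{j=2}^n(x-\lambda_j)$ and establish it inductively on $k$ using Newton's identities, with the base case supplied by the hypothesis $b_2\le 0$.

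First, since $\{\lambda_2,\ldots,\lambda_n\}$ is closed under complex conjugation and lies in the closed left half-plane, $g$ factors over $\mathbb{R}$ as $\prod_l(x+c_l)\prod_k\bigl((x+a_k)^2+b_k^2\bigr)$ with $a_k,b_k,c_l\ge 0$. Each factor, and therefore $g$ itself, has nonnegative coefficients; writing $g(x)=x^{n-1}+\alpha_1x^{n-2}+\cdots+\alpha_{n-1}$, all $\alpha_i\ge 0$. Also $\rho=2t+\alpha_1\ge 0$, and expanding $f=(x-\rho)g$ yields $b_k=\alpha_k-\rho\,\alpha_{k-1}$ for $2\le k\le n$ (with $\alpha_n:=0$). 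Thus $b_k\le 0\iff\alpha_k\le\rho\,\alpha_{k-1}$, the hypothesis $b_2\le 0$ reads $\alpha_2\le\rho\,\alpha_1$, and $\rho\ge\alpha_1$ always holds.

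Next, Newton's identities applied to $f$ give
$$kb_k\;=\;-p_k+2t\,p_{k-1}-\sum_{j=2}^{k-1}b_j\,p_{k-j},\qquad p_m:=\rho^m+\sum_{j=2}^n\lambda_j^m,$$
where each $p_m$ is real by conjugate-closure; in particular the hypothesis $b_2\le 0$ is precisely $p_2\ge(2t)^2$. I would induct on $k\ge 3$: assuming $b_2,\ldots,b_{k-1}\le 0$, one wants the right-hand side to be nonpositive. The cleanest path seems to combine this recursion with the analogous Newton identity on $g$, namely $k\alpha_k=-q_k-\sum_{j=1}^{k-1}\alpha_j\,q_{k-j}$ with $q_m=\sum_{j=2}^n\lambda_j^m$, and then substitute $q_1=-\alpha_1$ and $\rho=2t+\alpha_1$ to obtain an explicit expression for $kb_k$ as a signed combination of products of $\alpha_i$'s, $q_i$'s, and $t$ whose nonpositivity can be read off from $\alpha_i\ge 0$, the induction hypothesis, and the base inequality $\alpha_2\le\rho\,\alpha_1$.

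The main obstacle is controlling the signs of the power sums $p_m$ for $m\ge 3$: a priori, contributions from near-imaginary conjugate pairs of $\lambda_j$ can make $\sum\lambda_j^m$ negative. The hypothesis $b_2\le 0$ is essential here, as the purely imaginary list $\{bi,-bi,ci,-ci\}$ with $t=0$ (so $\rho=0$) has $b_2=b^2+c^2>0$ and $b_4=b^2c^2>0$ simultaneously, showing that without the base constraint $b_2\le 0$ the conclusion $b_k\le 0$ for all $k\ge 3$ need not hold. An alternative route would be a nested induction on $n$: peel off a single real factor $(x+c_l)$ or quadratic factor $((x+a_k)^2+b_k^2)$ from $g$ and verify that a suitable analog of $b_2\le 0$ descends to the reduced polynomial, absorbing any nonnegative perturbation into the induction.
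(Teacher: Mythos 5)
Your reformulation is sound, but what you have is a plan with an acknowledged hole, not a proof. Writing $g(x)=\prod_{j=2}^n(x-\lambda_j)=x^{n-1}+\alpha_1x^{n-2}+\cdots+\alpha_{n-1}$ with all $\alpha_i\ge 0$, noting $\rho=2t+\alpha_1\ge\alpha_1\ge 0$, and computing $b_k=\alpha_k-\rho\,\alpha_{k-1}$ is all correct, as is the Newton recursion
\begin{equation*}
k\,b_k=-p_k+2t\,p_{k-1}-\sum_{j=2}^{k-1}b_j\,p_{k-j}.
\end{equation*}
But the induction never closes: as you say yourself, the power sums $p_m=\rho^m+\sum_j\lambda_j^m$ for $m\ge 3$ can be of either sign, and the hypotheses $b_2,\ldots,b_{k-1}\le 0$ together with $p_2\ge(2t)^2$ do not by themselves force the right-hand side above to be nonpositive. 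Identifying this as \emph{the main obstacle} is accurate, but leaving it unresolved means the central implication of the lemma (that the single inequality $\alpha_2\le\rho\,\alpha_1$ propagates to every $\alpha_k\le\rho\,\alpha_{k-1}$) is not established.

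The fallback you propose, a nested induction on $n$ that peels a linear or quadratic factor off $g$, concretely fails at the step you hope will go through: the base inequality does not descend. If $g=(x+c)h$ with $c\ge 0$, set $(x-\rho)h(x)=x^{n-2}-2t'x^{n-3}+c_2x^{n-4}+\cdots$ where $2t'=2t+c\ge 0$; since $f=(x+c)\cdot(x-\rho)h$, one finds $b_2=c_2-2ct'$, i.e.\ $c_2=b_2+2ct'\ge b_2$, so $b_2\le 0$ gives no control on the sign of $c_2$ and the inductive hypothesis cannot be invoked for $h$. Peeling a conjugate pair via $q(x)=(x+a)^2+b^2$ has the same defect: with $t''=t+a$ one gets $c_2=b_2+4at''-(a^2+b^2)$, again not forced to be $\le 0$ by $b_2\le 0$. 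Finally, note that the paper does not prove this lemma at all; it imports it verbatim from Laffey and \v{S}migoc \cite{Laffey}, so there is no internal argument to compare against, and your attempt has to stand on its own, which at present it does not.
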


\section{Companion matrices and the \v{S}migoc's glue.}

\noindent We say that a list $\Lambda =\{\lambda _{1},\lambda _{2},\ldots
,\lambda _{n}\}$ of complex numbers is on the left half-plane if $\lambda
_{1}>0,$ $\func{Re}\lambda _{i}\leq 0,$ $i=2,3,\ldots ,n.$ In this section
we give a sufficient condition for a left half-plane list of complex numbers
to be $\mathcal{UR}$. Of course, it is our interest to consider lists of
complex numbers containing elements out of realizability region of lists of 
\v{S}migoc type. Our strategy consists in to decompose the given list $%
\Lambda =\{\lambda _{1},\lambda _{2},\ldots ,\lambda _{n}\}$ into sub-lists%
\begin{equation*}
\Lambda _{k}=\{\lambda _{k1},\lambda _{k2},\ldots ,\lambda _{kp_{k}}\},\text{
}\lambda _{11}=\lambda _{1},\text{ }k=1,2,\ldots ,t,
\end{equation*}%
with auxiliary lists 
\begin{eqnarray*}
\Gamma _{1} &=&\Lambda _{1} \\
\Gamma _{k} &=&\{s_{1}(\Gamma _{k-1}),\lambda _{k1},\lambda _{k2},\ldots
,\lambda _{kp_{k}}\},\text{ \ }k=2,,\ldots ,t,
\end{eqnarray*}%
each one of them being the spectrum of a nonnegative companion matrix $%
A_{k}, $ in such a way that it be possible to apply \v{S}migoc's glue
technique to the matrices $A_{k},$ to obtain an $n$-by-$n$ nonnegative
matrix with spectrum $\Lambda $ for each possible JCF allowed by $\Lambda .$
In the case $s_{1}(\Lambda )>0,$ with $\lambda _{i}\neq 0,$ $i=2,\ldots ,n,$
we may choose, if they exist, sub-lists $\Gamma _{k}$ being the spectrum of
a diagonalizable nonnegative companion matrix with a positive column. Then,
after \v{S}migoc's glue, we obtain a diagonalizable nonnegative $n$-by-$n$
matrix $A$ with spectrum $\Lambda $ and a positive column, which is similar
to a diagonalizable positive matrix. Thus, from the extension in \cite%
{Collao}, $\Lambda $ is $\mathcal{UR}$. Next we have the following corollary
from Theorem \ref{Smigoc}:

\begin{corollary}
\label{Main}Let $\Lambda =\{\lambda _{1},\lambda _{2},\ldots ,\lambda _{n}\}$
be a realizable left half-plane list of complex numbers. Suppose that for
each JCF $\mathbf{J}$ allowed by $\Lambda ,$ there exists a decomposition of 
$\Lambda $ as%
\begin{eqnarray*}
\Lambda &=&\Lambda _{1}\cup \Lambda _{2}\cup \cdots \cup \Lambda _{t},\text{
where} \\
\Lambda _{k} &=&\{\lambda _{k1},\lambda _{k2},\ldots ,\lambda _{kp_{k}}\},%
\text{ }k=1,2,\ldots ,t,\text{ }\lambda _{11}=\lambda _{1},
\end{eqnarray*}%
with auxiliary lists%
\begin{eqnarray*}
\Gamma _{1} &=&\Lambda _{1}, \\
\Gamma _{k} &=&\{s_{1}(\Gamma _{k-1}),\lambda _{k1},\lambda _{k2}\ldots
,\lambda _{kp_{k}}\},\text{ }k=2,\ldots ,t,
\end{eqnarray*}%
being the spectrum of a nonnegative companion matrix $A_{k}$ with JCF $%
J(A_{k})$ as a sub-matrix of $\mathbf{J,}$ $k=1,2,\ldots ,t$.\newline
Then $\Lambda $ is universally realizable.
\end{corollary}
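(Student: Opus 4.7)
The plan is to apply Šmigoc's glue (Theorem \ref{Smigoc}) inductively to the companion matrices $A_{1},\ldots,A_{t}$ produced by the hypothesis. Fix a JCF $\mathbf{J}$ allowed by $\Lambda$ and take the decomposition $\Lambda=\Lambda_{1}\cup\cdots\cup\Lambda_{t}$, the auxiliary lists $\Gamma_{k}$, and the nonnegative companion realizations $A_{k}$ of $\Gamma_{k}$ as in the statement. The goal is to build a nonnegative $n$-by-$n$ matrix realizing $\Lambda$ with exactly this prescribed JCF.

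First I would record two structural facts that make the iteration go through. Because each $A_{k}$ is a companion matrix, its only possibly nonzero diagonal entry sits in the bottom-right corner and equals the trace $s_{1}(\Gamma_{k})$. Moreover, for $k\geq 2$ the positive real number $\rho_{k}:=s_{1}(\Gamma_{k-1})$ is the spectral radius of $A_{k}$ (the remaining elements of $\Gamma_{k}$ lie in the closed left half-plane), and a companion matrix has every eigenvalue of geometric multiplicity one; hence $\rho_{k}$ corresponds to a $1$-by-$1$ Jordan block in $J(A_{k})$. This is exactly the shape required to play the role of $B$ in Theorem \ref{Smigoc} with $c=\rho_{k}$.

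Next I would carry out the induction. Set $C_{1}:=A_{1}$, which is nonnegative and has bottom-right entry $s_{1}(\Gamma_{1})$. Assuming $C_{k}$ is nonnegative with bottom-right entry $s_{1}(\Gamma_{k})$, split
\begin{equation*}
C_{k}=\left[\begin{array}{cc} C_{k}' & \mathbf{a} \\ \mathbf{b}^{T} & s_{1}(\Gamma_{k})\end{array}\right],
\end{equation*}
and apply Theorem \ref{Smigoc} with $A=C_{k}$ and $B=A_{k+1}$, using nonnegative left and right Perron eigenvectors $\mathbf{t},\mathbf{s}$ of $A_{k+1}$ at $\rho_{k+1}$, rescaled so that $\mathbf{t}^{T}\mathbf{s}=1$, to form
\begin{equation*}
C_{k+1}=\left[\begin{array}{cc} C_{k}' & \mathbf{a}\,\mathbf{t}^{T} \\ \mathbf{s}\,\mathbf{b}^{T} & A_{k+1}\end{array}\right].
\end{equation*}
Then $C_{k+1}$ is nonnegative (entries are sums of products of nonnegative quantities), its bottom-right entry equals that of $A_{k+1}$, namely $s_{1}(\Gamma_{k+1})$, and Theorem \ref{Smigoc} gives $J(C_{k+1})=J(C_{k})\oplus I(A_{k+1})$, where $I(A_{k+1})$ is the JCF of $A_{k+1}$ with the $1$-by-$1$ block at $\rho_{k+1}$ removed, i.e.\ the Jordan piece of $A_{k+1}$ that corresponds exactly to $\Lambda_{k+1}$.

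Unwinding the recursion, $J(C_{t})=J(A_{1})\oplus I(A_{2})\oplus\cdots\oplus I(A_{t})$, and the total size is $p_{1}+\sum_{k=2}^{t}p_{k}=n$. Since each $J(A_{k})$ is a sub-matrix of $\mathbf{J}$ by hypothesis and the auxiliary $1$-by-$1$ blocks at $\rho_{k}$ are precisely the ones consumed by the glue, these pieces reassemble into $\mathbf{J}$. Thus $C_{t}$ is a nonnegative realization of $\Lambda$ with JCF $\mathbf{J}$, and since $\mathbf{J}$ was arbitrary, $\Lambda$ is $\mathcal{UR}$. The main obstacle in writing this cleanly is the bookkeeping: verifying at each step that $\rho_{k+1}$ really sits in a $1$-by-$1$ Jordan block of $A_{k+1}$ (so that Theorem \ref{Smigoc} applies), that the Perron eigenvectors can be chosen nonnegative with $\mathbf{t}^{T}\mathbf{s}=1$ (so that nonnegativity is preserved), and that the trimmed Jordan pieces $I(A_{k})$ fit back together into the prescribed target $\mathbf{J}$.
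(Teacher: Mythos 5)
Your proof is correct and takes the same route as the paper: the paper's own proof is a one-liner ("apply Šmigoc's glue to the matrices $A_k$"), while you spell out the induction, the structural facts about companion matrices (trace sits in the bottom-right corner, each eigenvalue has geometric multiplicity one so the Perron root of $A_k$ gives a $1$-by-$1$ block), the nonnegative choice of Perron eigenvectors needed to preserve nonnegativity of the glued matrix, and the size and JCF bookkeeping. The only places where you are, like the paper, slightly informal are the tacit assumption $\rho_k>0$ (needed so that $\rho_k$ has algebraic multiplicity one in $\Gamma_k$) and the final reassembly of $J(A_1)\oplus I(A_2)\oplus\cdots\oplus I(A_t)$ into $\mathbf J$, but you flag these explicitly and they are genuinely part of what the hypothesis "$J(A_k)$ is a sub-matrix of $\mathbf J$" is meant to guarantee.
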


\begin{proof}
Since each matrix $A_{k},$ $k=1,2,\ldots ,t$, is nonnegative companion with
JCF $J(A_{k})$ being a submatrix of $\mathbf{J,}$ then, from \v{S}migoc's
glue applied to matrices $A_{k},$ we obtain an $n$-by-$n$ nonnegative matrix
with spectrum $\Lambda $ and JCF $\mathbf{J.}$ As $\mathbf{J}$ is any JCF
allowed by $\Lambda ,$ then $\Lambda $ is $\mathcal{UR}.$
\end{proof}

\bigskip

The following result is well known and useful.

\begin{lemma}
\label{CS}Let $A$ be a diagonalizable irreducible nonnegative matrix with
spectrum $\Lambda =\{\lambda _{1},\ldots ,\lambda _{n}\}$ and a positive row
or column. Then $A$ is similar to a diagonalizable nonnegative matrix $B\in 
\emph{CS}_{\lambda _{1}},$ with a positive row or column.
\end{lemma}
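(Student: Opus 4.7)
The plan is to use a diagonal similarity via the Perron eigenvector of $A$. Since $A$ is irreducible and nonnegative with spectral radius $\lambda_1$, Perron--Frobenius supplies a strictly positive right eigenvector $\mathbf{v}=(v_1,\ldots,v_n)^{T}$ satisfying $A\mathbf{v}=\lambda_1 \mathbf{v}$. Setting $D=\mathrm{diag}(v_1,\ldots,v_n)$, define $B=D^{-1}AD$. This is the standard construction; the task is merely to verify that all four required properties of $B$ survive.

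First I would check constant row sums: because $\mathbf{v}=D\mathbf{e}$, we have $B\mathbf{e}=D^{-1}AD\mathbf{e}=D^{-1}A\mathbf{v}=\lambda_1 D^{-1}\mathbf{v}=\lambda_1\mathbf{e}$, so $B\in\mathcal{CS}_{\lambda_1}$. Nonnegativity is immediate from $(B)_{ij}=v_j a_{ij}/v_i$ and $D,D^{-1}>0$. Diagonalizability transfers through similarity (the Jordan form is a similarity invariant), so $B$ is diagonalizable with the same spectrum as $A$.

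Finally I would verify the positive row/column property. If $A$ has a positive column, say column $k$ (so $a_{ik}>0$ for every $i$), then $(B)_{ik}=v_k a_{ik}/v_i>0$ for every $i$, since $v_k,v_i>0$; hence column $k$ of $B$ is positive. The same computation applied to a positive row $i$ of $A$ shows that row $i$ of $B$ is positive.

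There is really no substantive obstacle here; the lemma is essentially a bookkeeping statement about the Perron diagonal scaling. The only minor point to be careful about is citing irreducibility precisely where it is needed, namely to guarantee that the Perron eigenvector is \emph{strictly} positive (so that $D^{-1}$ exists and $B$ inherits the positive row or column of $A$ entrywise).
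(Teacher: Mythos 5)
Your proof is correct and follows the same route as the paper: both conjugate $A$ by the diagonal matrix built from the strictly positive Perron eigenvector (guaranteed by irreducibility) and observe that this similarity preserves nonnegativity, diagonalizability, the spectrum, and the positive row or column while producing constant row sums $\lambda_1$. Your write-up is simply a more detailed version of the paper's argument.
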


\begin{proof}
If $A$ is irreducible nonnegative, it has a positive eigenvector $\mathbf{x}%
^{T}=[x_{1},\ldots ,x_{n}].$ Then if $D=dig\{x_{1},\ldots ,x_{n}\},$ the
matrix 
\begin{equation*}
{\small B=D}^{-1}{\small AD=}\left[ \frac{x_{j}}{x_{i}}a_{i,j}\right] 
{\small \in CS}_{\lambda _{1}}
\end{equation*}%
is nonnegative with a positive row or column.
\end{proof}

\bigskip

Suppose all lists $\Gamma _{k}$ in Corollary \ref{Main}, can be taken as the
spectrum of a diagonalizable nonnegative companion matrix $A_{k}$ with a
positive column (the last one). Then, since the glue of matrices $A_{k}$
gives an $n$-by-$n$ diagonalizable irreducible nonnegative matrix $A$ with a
positive column and spectrum $\Lambda $, $A$ is similar to a diagonalizable
positive matrix with spectrum $\Lambda $ and therefore $\Lambda $ is $%
\mathcal{UR}$. This is what the next result shows.

\begin{corollary}
\label{RS1}Let $\Lambda =\{\lambda _{1},\lambda _{2},\ldots ,\lambda _{n}\},$
$\lambda _{i}\neq 0,$ $i=2,\ldots ,n,$ $s_{1}(\Lambda )>0,$ be a realizable
left half-plane list of complex numbers. If there is a decomposition of $%
\Lambda $ as in Corollary \ref{Main}, with all lists $\Gamma _{k}$ being the
spectrum of a diagonalizable nonnegative companion matrix $A_{k},$ with a
positive column, then $\Lambda $ is universally realizable.
\end{corollary}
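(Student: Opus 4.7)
The plan is to iteratively glue the companion matrices $A_1,\ldots,A_t$ provided by the hypothesis into an $n\times n$ matrix $A$ that is simultaneously nonnegative, diagonalizable, of spectrum $\Lambda$, and carries a positive column; once this is done, the extension of Minc's theorem from \cite{Collao} (together with Lemma \ref{CS}) will yield the universal realizability of $\Lambda$.

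At each step of the iteration, Theorem \ref{Smigoc} applies: the bottom-right entry of $A_k$ equals $s_1(\Gamma_k)$ (the trace of its companion form), and $s_1(\Gamma_k)$ is precisely the Perron eigenvalue of $A_{k+1}$, associated with a $1\times 1$ Jordan block since $A_{k+1}$ is diagonalizable. The vectors $\mathbf{a},\mathbf{b}$ extracted from $A_k$ are nonnegative because $A_k$ is a nonnegative companion matrix, and the left and right Perron eigenvectors $\mathbf{t},\mathbf{s}$ of $A_{k+1}$ may be taken strictly positive because $A_{k+1}$ is irreducible (its directed graph is strongly connected thanks to the positive last column, the unit subdiagonal, and the hypothesis $\lambda_i\neq 0$, which keeps the constant term of its characteristic polynomial nonzero). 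Consequently, each glue is nonnegative, and since every $J(A_k)$ is diagonal the JCF of the glue is diagonal too; the final matrix $A$ is therefore diagonalizable with spectrum $\Lambda$.

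Tracking the last column through the glue, one sees that its lower block coincides with the last column of $A_{k+1}$ (positive by hypothesis), while its upper block equals $t_m\mathbf{a}$, which is positive because $\mathbf{a}$ is the (positive) penultimate portion of the last column of $A_k$ and $t_m>0$ by positivity of $\mathbf{t}$. Iterating, the final matrix $A$ is diagonalizable, nonnegative, irreducible, and has a positive column. Lemma \ref{CS} then produces a diagonalizable nonnegative matrix $B\in\mathcal{CS}_{\lambda_1}$ similar to $A$ and still possessing a positive column, so $\Lambda$ is diagonalizably realized by $B\in\mathcal{CS}_{\lambda_1}$ with a positive column; the extension in \cite{Collao} concludes that $\Lambda$ is $\mathcal{UR}$.

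The principal obstacle is confirming that the positive-column property survives the dyadic-product replacements inside \v{S}migoc's glue. This rests on the strict positivity of the Perron eigenvectors $\mathbf{s},\mathbf{t}$, which in turn rests on the irreducibility of each intermediate companion matrix, a feature enforced precisely by the non-vanishing eigenvalue hypothesis $\lambda_i\neq 0$ for $i\geq 2$ together with $s_1(\Lambda)>0$.
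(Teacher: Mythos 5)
Your proof is correct and follows the same line of reasoning as the paper: use the positive last column of each companion matrix $A_k$ to obtain irreducibility and hence strictly positive Perron eigenvectors $\mathbf{s},\mathbf{t}$, observe that the dyadic block $\mathbf{a}\mathbf{t}^T$ is positive so the glued matrix keeps a positive last column at every stage, and then invoke Lemma~\ref{CS} together with the extension from \cite{Collao}. You track the surviving positive column somewhat more explicitly than the paper does (via $t_m\mathbf{a}$), but the argument is otherwise the same.
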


\begin{proof}
It is enough to prove the result for two lists $\Gamma _{k}$ of the
decomposition of $\Lambda .$ Let $\Gamma _{k-1}$ and $\Gamma _{k},$ $%
k=2,\ldots ,t,$ be the spectrum, respectively, of matrices $A_{k-1}$ and $%
A_{k},$ which are diagonalizable nonnegative companion with a positive
column (the last one). Then $A_{k-1}$ and $A_{k}$ are irreducible. In
particular, $A_{k}$ has a positive eigenvector $\mathbf{s}$ and, since $%
A_{k}^{T}$ is also irreducible, $A_{k}$ has also a positive left eigenvector 
$\mathbf{t}^{T}$ with $\mathbf{t}^{T}\mathbf{s}=1.$ Now, let 
\begin{equation*}
A_{k-1}=\left[ 
\begin{array}{cc}
A_{1,k-1} & \mathbf{a} \\ 
\mathbf{b}^{T} & s_{1}(\Gamma _{k-1})%
\end{array}%
\right] .
\end{equation*}%
Since the last column of $A_{k-1}$ is positive, the vector $\mathbf{a}$ is
also positive and $\mathbf{at}^{T}$ is a positive submatrix. Therefore, the
glue of $A_{k-1}$ with $A_{k},$ 
\begin{equation*}
C_{k}=\left[ 
\begin{array}{cc}
A_{1,k-1} & \mathbf{at}^{T} \\ 
\mathbf{sb}^{T} & A_{k}%
\end{array}%
\right] ,
\end{equation*}%
is a diagonalizable nonnegative matrix with its last column being positive.
Note that $C_{k}$ is also irreducible. Then $C_{k}$ has, besides, a positive
eigenvector, and from Lemma \ref{CS} $C_{k}$ is similar to a matrix with
constant row sums and with its last column being positive. Thus, $C_{k}$ is
similar to a diagonalizable positive matrix. Then, \v{S}migoc's glue applied
to all matrices $A_{k}$ gives an $n$-by-$n$ diagonalizable irreducible
nonnegative matrix $A$ with a positive column and spectrum $\Lambda $.
Therefore, $A$ is similar to a diagonalizable positive matrix with spectrum $%
\Lambda $ and from the extension in \cite{Collao} $\Lambda $ is $\mathcal{UR}
$.
\end{proof}

\bigskip

Observe that if $\lambda _{i}\neq 0,$ $i=2,\ldots ,n;$ $s_{1}(\Lambda )>0;$ $%
b_{2}(A_{k})>0$ in Corollary \ref{RS1}, then we can guarantee the existence
of an $n$-by-$n$ diagonalizable nonnegative irreducible matrix $A$ with
spectrum $\Lambda $ and a positive column. Thus, this is enough to show the
universal realizability of $\Lambda .$

\begin{example}
Consider the list%
\begin{eqnarray*}
\Lambda &=&\{23,-2,-2,-1\pm 5i,-1\pm 5i,-1\pm 5i,-2\pm 7i,-2\pm 7i\},\text{
with} \\
\Gamma _{1} &=&\{23,-1\pm 5i\},\text{ }\Gamma _{2}=\{21,-2,-1\pm 5i,-2\pm
7i\}, \\
\Gamma _{3} &=&\{13,-2,-1\pm 5i,-2\pm 7i\}.
\end{eqnarray*}%
The diagonalizable companion matrices 
\begin{eqnarray*}
A_{1} &=&\left[ 
\begin{array}{ccc}
0 & 0 & 598 \\ 
1 & 0 & 20 \\ 
0 & 1 & 21%
\end{array}%
\right] ,\text{ }A_{2}=\left[ 
\begin{array}{cccccc}
0 & 0 & 0 & 0 & 0 & 57\,876 \\ 
1 & 0 & 0 & 0 & 0 & 35\,002 \\ 
0 & 1 & 0 & 0 & 0 & 6266 \\ 
0 & 0 & 1 & 0 & 0 & 1695 \\ 
0 & 0 & 0 & 1 & 0 & 69 \\ 
0 & 0 & 0 & 0 & 1 & 13%
\end{array}%
\right] ,\text{ } \\
A_{3} &=&\left[ 
\begin{array}{cccccc}
0 & 0 & 0 & 0 & 0 & 35\,828 \\ 
1 & 0 & 0 & 0 & 0 & 20\,618 \\ 
0 & 1 & 0 & 0 & 0 & 3194 \\ 
0 & 0 & 1 & 0 & 0 & 903 \\ 
0 & 0 & 0 & 1 & 0 & 5 \\ 
0 & 0 & 0 & 0 & 1 & 5%
\end{array}%
\right]
\end{eqnarray*}%
realize lists $\Gamma _{1},\Gamma _{2}$ and $\Gamma _{3},$ respectively. 
\v{S}migoc's glue technique applied to matrices $A_{1},A_{2}$ and $A_{3}$
gives a $13$-by-$13$ diagonalizable irreducible nonnegative matrix with a
positive column and spectrum $\Lambda .$ Therefore, from Lemma \ref{CS} and 
\cite{Collao}, $\Lambda $ is UR.
\end{example}

\section{The effect of adding a negative real number to a not UR list}

\noindent In this section we show how to add a negative real number $-c$ to
a list of complex numbers 
\begin{equation*}
\Lambda =\{\lambda ,-a\pm bi,\ldots ,-a\pm bi\},\text{ }\lambda ,a,b>0,\text{
with\ }s_{1}(\Lambda )>0,
\end{equation*}%
which is not $\mathcal{UR}$ or we do not know whether it is, makes%
\begin{equation*}
\Lambda _{c}=\{\lambda ,-c,\underset{(n-2)\text{ complex numbers}}{%
\underbrace{-a\pm bi,\ldots ,-a\pm bi}}\}
\end{equation*}%
$\mathcal{UR}$. For instance, the list $\Lambda _{1}=\{6,-1\pm 3i,-1\pm 3i\}$
is realizable, but we do not know whether it is $\mathcal{UR}$, while $%
\Lambda _{2}=\{17,-3\pm 9i,-3\pm 9i\}$ is not realizable. However, both
lists become $\mathcal{UR}$ if we add an appropriate negative real number $%
-c $ to each of them.

We start this section with a lemma which gives a formula to compute the
coefficient $b_{2}$ in (\ref{b2}), Lemma \ref{LS2}, for lists $\Lambda _{c}$

\begin{lemma}
\label{JA}Let 
\begin{equation*}
\Lambda _{c}=\{\lambda ,-c,\underset{(n-2)\text{ complex numbers}}{%
\underbrace{-a\pm bi,\ldots ,-a\pm bi}}\}
\end{equation*}%
be a realizable left half-plane lists of complex numbers and let $\Lambda
_{c}=\Lambda _{1}\cup \Lambda _{2}\cup \cdots \cup \Lambda _{t}$ be a
decomposition of $\Lambda _{c},$ $-c\in \Lambda _{t},$ with auxiliary lists $%
\Gamma _{k}$ with realizing companion matrices $A_{k},$ $k=1,2,\ldots ,t,$
as in Corollary \ref{Main}, associated with a desired JCF allowed by $%
\Lambda _{c}.$ Then the entry in position $(n-1,n)$ of a matrix $A_{k},$ $%
k=1,2,\ldots ,t,$ is 
\begin{equation}
b_{2}=p(2a\lambda -2a^{2}n+(4k-2p+1)a^{2}-b^{2})+c(\lambda -(n-2)a),
\label{Fb2}
\end{equation}%
where $(k-1)$ is the number of pairs $-a\pm bi$ of the last list $\Gamma
_{t} $ of the diagonalizable decomposition of $\Lambda _{c},$ plus the
number of pairs $-a\pm bi$ of each previous list $\Gamma _{k},$ $k=1,\ldots
,t-1,$ of the decomposition, and $p$ is the number of pairs $-a\pm bi$ of
the corresponding list $\Gamma _{k}.$ Moreover, $b_{2}$ increases if $k$
increases.
\end{lemma}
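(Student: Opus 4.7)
The plan is to compute the $(n-1,n)$-entry of the companion matrix $A_t$ (the one containing $-c$) by directly reading off the coefficient of $x^{m_t-2}$ in its characteristic polynomial. In the companion form used throughout the paper (as is transparent in the $3\times 3$ example of Section~3), the last-column entries correspond, up to the alternating-sign convention of elementary symmetric polynomials, to the coefficients of $\prod_{\mu\in\Gamma_t}(x-\mu)=x^{m_t}-e_1x^{m_t-1}+e_2x^{m_t-2}-\cdots$; in particular, the entry in position $(m_t-1,m_t)$ equals $-e_2(\Gamma_t)$.

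First, I would unpack $\Gamma_t=\{s_1(\Gamma_{t-1}),\,-c,\,-a\pm bi,\ldots,-a\pm bi\}$ (with $p$ pairs, hence total size $2p+2$) and compute $s_1(\Gamma_{t-1})$. Iterating the recursion $s_1(\Gamma_k)=s_1(\Gamma_{k-1})+s_1(\Lambda_k)$ gives the telescoping identity $s_1(\Gamma_{t-1})=s_1(\Lambda_c)-s_1(\Lambda_t)$. Since $s_1(\Lambda_c)=\lambda-c-(n-2)a$ and $s_1(\Lambda_t)=-c-2ap$, one obtains $s_1(\Gamma_{t-1})=\lambda-2aq$, where $q:=(n-2)/2-p$ is the number of pairs $-a\pm bi$ lying in $\Lambda_1\cup\cdots\cup\Lambda_{t-1}$.

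Second, I would expand $e_2(\Gamma_t)$ in three blocks: the product $s_1(\Gamma_{t-1})\cdot(-c)$; the cross contribution $\bigl(s_1(\Gamma_{t-1})+(-c)\bigr)\cdot S$ where $S=-2ap$ is the sum of the pair elements; and the second elementary symmetric polynomial of the $p$ pairs $-a\pm bi$, which a direct computation (via $e_2=\tfrac12[(\sum z_i)^2-\sum z_i^2]$) yields as $2a^2p^2-(a^2-b^2)p$. Substituting and collecting terms, the $c$-proportional part factors cleanly as $c\bigl(\lambda-(n-2)a\bigr)$, while the $p$-proportional remainder factors as $p\bigl(2a\lambda+(1-2p-4q)a^2-b^2\bigr)$.

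Third, using $q=(n-2)/2-p$, the $a^2$-coefficient rewrites as $(4k-2p+1)-2n$ for the integer $k$ described in the statement, matching the claimed formula. The monotonicity assertion is then immediate, since the only $k$-dependent contribution is the non-negative quantity $4pa^2\cdot k$.

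The main obstacle is purely bookkeeping: one has to translate carefully between the parameter $q$ arising naturally from the decomposition (the count of pairs strictly before $\Gamma_t$) and the parameter $k$ used in the lemma. The arithmetic identity $1-4q-2p=(4k-2p+1)-2n$ fixes this translation. Once $\Gamma_t$ is unpacked and $s_1(\Gamma_{t-1})$ is evaluated by telescoping, the remainder is routine elementary-symmetric-polynomial algebra with no conceptual difficulty.
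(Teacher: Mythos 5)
Your approach is the same as the paper's, which simply observes that $b_2$ is (up to sign) the second elementary symmetric function $e_2(\Gamma_k)$ and leaves all bookkeeping to the reader. You supply that bookkeeping correctly, including the sign the paper's proof glosses over (the $(m_t{-}1,m_t)$ entry of the companion matrix is $-e_2(\Gamma_t)$, not $e_2(\Gamma_t)=\sum_{j_1<j_2}\lambda_{j_1}\lambda_{j_2}$ as the paper writes), the telescoping evaluation $s_1(\Gamma_{t-1})=\lambda-2aq$, and the translation from your $q$ to the paper's $k$ (forced to be $k=p+1$ when $q=(n-2)/2-p$), all of which match formula~(\ref{Fb2}) and the worked examples.
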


\begin{proof}
It is well known that $b_{2}=\dsum\limits_{1\leq j_{1}<j_{2}\leq n}\lambda
_{j_{1}}\lambda _{j_{2}},$ with $\lambda _{ji}\in $ $\Gamma _{k},$ from
which $b_{2}$ in (\ref{Fb2}) is obtained. Moreover it is clear that $b_{2}$
increases when $k$ increases.
\end{proof}

\begin{example}
Consider%
\begin{equation*}
\Lambda _{c}={\Huge \{}\frac{77}{4},-3,\underset{8\text{ complex numbers}}{%
\underbrace{-2\pm 5i,\ldots ,-2\pm 5i}}{\Huge \}}.\text{ }
\end{equation*}%
The last diagonalizable list from the diagonalizable decomposition of $%
\Lambda _{c}$ is 
\begin{equation*}
\Gamma _{4}:(x-\frac{29}{4})(x+3)(x+2-5i)(x+2+5i)
\end{equation*}%
with realizing matrix$\ $\newline
\begin{eqnarray*}
A_{4} &=&\left[ 
\begin{array}{cccc}
0 & 0 & 0 & \frac{2523}{4} \\ 
1 & 0 & 0 & \frac{841}{4} \\ 
0 & 1 & 0 & \frac{\mathbf{39}}{\mathbf{4}} \\ 
0 & 0 & 1 & \frac{1}{4}%
\end{array}%
\right] \longrightarrow b_{2}(A_{4})=\frac{39}{4} \\
b_{2} &=&p(2a\lambda -2a^{2}n+(4k-2p+1)a^{2}-b^{2})+c(\lambda -(n-2)a) \\
b_{2}(A_{4}) &=&(4\frac{{\small 77}}{{\small 4}}-80+(8-2+1)4-25)+3(\frac{%
{\small 77}}{{\small 4}}-16)=\frac{39}{4}.
\end{eqnarray*}%
Suppose we want to obtain a nonnegative matrix with JCF%
\begin{equation*}
\mathbf{J}=diag\{J_{1}(\frac{77}{4}),J_{1}(-3),J_{2}(-2+5i),(J_{2}(-2-5i)\}.
\end{equation*}%
Then,%
\begin{eqnarray*}
\Gamma _{1}^{\prime } &=&\{\frac{77}{4},-2\pm 5i,-2\pm 5i\} \\
\Gamma _{2}^{\prime } &=&\{\frac{45}{4},-3,-2\pm 5i,-2\pm 5i\}.
\end{eqnarray*}%
If $A_{1}^{\prime },A_{2}^{\prime }$ are companion realizing matrices for $%
\Gamma _{1}^{\prime }$ and $\Gamma _{2}^{\prime },$ respectively, then from
Lemma \ref{JA}, $b_{2}(A_{2}^{\prime })=\frac{103}{4},$ $b_{2}(A_{1}^{\prime
})=80$ guarantee that $A_{1}^{\prime }$ and $A_{2}^{\prime }$ are
nonnegative. Next, the glue of $A_{1}^{\prime }$ with $A_{2}^{\prime }$
gives a nonnegative matrix with JCF $\mathbf{J.}$
\end{example}

\begin{theorem}
\label{negc}Let $\Lambda =\{\lambda ,-a\pm bi,\ldots ,-a\pm bi\},$ fixed $%
\lambda ,$ $a,b>0,$ be a list of complex numbers with $s_{1}(\Lambda )>0.$
If 
\begin{equation}
\frac{(2n-11)a^{2}+b^{2}}{2a}\leq \lambda ,  \label{C1}
\end{equation}%
and there is a real number $c>0$ such that%
\begin{equation}
\frac{2a(na-\lambda )+b^{2}-7a^{2}}{\lambda -(n-2)a}\leq c\leq \lambda
-(n-2)a,  \label{C2}
\end{equation}%
then 
\begin{equation*}
\Lambda _{c}=\{\lambda ,-c,\underset{(n-2)\text{ complex numbers}}{%
\underbrace{-a\pm bi,\ldots ,-a\pm bi}}\}
\end{equation*}%
becomes universally realizable.
\end{theorem}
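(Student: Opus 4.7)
My strategy is to apply Corollary~\ref{Main}: for each Jordan canonical form $\mathbf{J}$ allowed by $\Lambda_c$ I will exhibit a decomposition $\Lambda_c = \Lambda_1 \cup \cdots \cup \Lambda_t$ whose auxiliary lists $\Gamma_k$ are each the spectrum of a nonnegative companion matrix $A_k$, with $J(A_k)$ sitting inside $\mathbf{J}$ as a submatrix. Since $\lambda$ and $-c$ are simple, the JCF is determined by a partition $p_1,\ldots,p_s$ of $m=(n-2)/2$ describing how the repeated pairs $-a\pm bi$ are grouped into Jordan blocks. The natural choice is to take $\Lambda_1 = \{\lambda\} \cup \{p_1 \text{ pairs of } -a\pm bi\}$, $\Lambda_j = \{p_j \text{ pairs of } -a\pm bi\}$ for $1<j<t$, and $\Lambda_t = \{-c\} \cup \{p_s \text{ pairs of } -a\pm bi\}$, placing $-c$ in the last sublist so that the Perron chain $s_1(\Gamma_k)$ has a chance to stay nonnegative along the way. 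Because a companion matrix has exactly one Jordan block per distinct root, $A_k$ realizes a single $p_k$-sized block of $\mathbf{J}$ for each of $-a\pm bi$, and \v{S}migoc gluing then assembles the full $\mathbf{J}$.

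The remaining task is to show that every $A_k$ is nonnegative. Writing the characteristic polynomial of $\Gamma_k$ in the form of Lemma~\ref{LS2}, nonnegativity of the last column of $A_k$ reduces, by that lemma, to the single sign condition $b_2(A_k)\ge 0$ in the convention of Lemma~\ref{JA}, together with $s_1(\Gamma_{k-1})\ge 0$. The closed formula of Lemma~\ref{JA},
\[
b_2 \;=\; p\bigl(2a\lambda - 2a^2 n + (4k-2p+1)a^2 - b^2\bigr) + c\bigl(\lambda - (n-2)a\bigr),
\]
together with its stated monotonicity in $k$, pinpoints the worst case: among the $-c$-free sublists the smallest $b_2$ occurs for the sublist with the largest preceding pair count, whereas for the sublist $\Lambda_t$ containing $-c$ the additive term $c(\lambda-(n-2)a)$ must compensate for the extra negativity injected by $-c$. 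This is where the hypotheses enter: the upper bound $c\le \lambda-(n-2)a$ in (C2) forces every intermediate Perron $s_1(\Gamma_k)$ to be nonnegative, while the lower bound on $c$ in (C2) is precisely the rearrangement of $b_2(A_t)\ge 0$ using the formula above. Condition (C1) plays the corresponding role for the worst $-c$-free sublist: substituting the extremal admissible $(k,p)$ into the formula and solving for $\lambda$ recovers the inequality $(2n-11)a^2 + b^2 \le 2a\lambda$.

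The main obstacle I anticipate is combinatorial bookkeeping: the formula for $b_2$ depends on the decomposition through $(k,p)$, and the hypotheses (C1) and (C2) must be strong enough for \emph{every} partition $p_1,\ldots,p_s$ of $m$. Correctly identifying the worst case, which by the monotonicity lies at the extreme block attached furthest from $\lambda$, and verifying that the corresponding sign inequality matches (C1) (respectively (C2) when $-c$ is involved), is the delicate step. Once these worst cases are settled, each $A_k$ is a nonnegative companion matrix and Corollary~\ref{Main} delivers the universal realizability of $\Lambda_c$.
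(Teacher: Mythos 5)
Your proposal follows essentially the same route as the paper's proof: decompose $\Lambda_c$ into sublists guided by the Jordan block partition, place $-c$ in the final sublist, realize each auxiliary list $\Gamma_k$ by a nonnegative companion matrix, and use Lemma~\ref{LS2} to reduce nonnegativity of each companion to the single sign condition on $b_2$ computed via the formula of Lemma~\ref{JA}; conditions (C1) and (C2) then handle the worst case exactly as you describe. The only organizational difference is that the paper first treats the fully diagonalizable JCF concretely (writing out the $3\times 3$ and $4\times 4$ companion matrices $A_{(n-4)/2}$ and $A_{(n-2)/2}$ and reading off the critical entries) and then appeals to the monotonicity of $b_2$ in $k$ for the non-diagonal JCFs, whereas you present a unified argument for all partitions at once — same tools, same worst case, same appeal to Corollary~\ref{Main}.
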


\begin{proof}
Consider the decomposition $\Lambda _{c}=\Lambda _{1}\cup \Lambda _{2}\cup
\cdots \cup \Lambda _{\frac{n-2}{2}},$ with%
\begin{eqnarray*}
\Lambda _{1} &=&\{\lambda ,-a\pm bi\},\text{ } \\
\Lambda _{k} &=&\{-a\pm bi\},\text{ }k={\small 2,\ldots ,}\text{$\frac{n-4}{2%
}$},\text{ } \\
\Lambda _{\frac{n-2}{2}} &=&\{-c,-a\pm bi\}.
\end{eqnarray*}%
We take the auxiliary sub-lists%
\begin{eqnarray*}
\Gamma _{1} &=&\Lambda _{1}=\{\lambda ,-a\pm bi\} \\
\Gamma _{2} &=&\{\lambda -2a,-a\pm bi\} \\
\Gamma _{3} &=&\{\lambda -4a,-a\pm bi\} \\
&&\vdots \\
\Gamma _{\frac{n-4}{2}} &=&\{\lambda -(n-6)a,-a\pm bi\}, \\
\Gamma _{\frac{n-2}{2}}\ &=&\{\lambda -(n-4)a,-c,-a\pm bi\},
\end{eqnarray*}%
where $\Gamma _{\frac{n-4}{2}}$ and $\Gamma _{\frac{n-2}{2}}$ are the
spectrum of the diagonalizable companion matrices%
\begin{equation*}
A_{\frac{n-4}{2}}=\left[ 
\begin{array}{ccc}
0 & 0 & (a^{2}+b^{2})(\lambda -(n-6)a) \\ 
1 & 0 & 2a\lambda -a^{2}(2n-11)-b^{2} \\ 
0 & 1 & \lambda -(n-4)a%
\end{array}%
\right]
\end{equation*}%
and 
\begin{equation*}
A_{\frac{n-2}{2}}=\left[ 
\begin{array}{cccc}
0 & 0 & 0 & (a^{2}+b^{2})(\lambda -(n-4)a)c \\ 
1 & 0 & 0 & (a^{2}+b^{2})(\lambda -(n-4)a)+(7a^{2}-b^{2}+2a\lambda -2a^{2}n)c
\\ 
0 & 1 & 0 & (\lambda -(n-2)a)c+(7a^{2}-b^{2}+2a\lambda -2a^{2}n) \\ 
0 & 0 & 1 & \lambda -(n-2)a-c%
\end{array}%
\right] ,
\end{equation*}%
respectively. Observe that sub-lists $\Gamma _{\frac{n-6}{2}},\ldots ,\Gamma
_{2},\Gamma _{1}$ have the same pair of complex numbers that the list $%
\Gamma _{\frac{n-4}{2}},$ but with a bigger Perron eigenvalue. Then, if $%
\Gamma _{\frac{n-4}{2}}$ is diagonalizably companion realizable, $\Gamma _{%
\frac{n-6}{2}},\ldots ,\Gamma _{2},\Gamma _{1}$ also are. Thus, from Lemma %
\ref{LS2} we only need to consider the entries in position $(2,3)$ in $A_{%
\frac{n-4}{2}}$ and in position $(3,4)$ in $A_{\frac{n-2}{2}}.$ From (\ref%
{C1}) and (\ref{C2}) these entries are nonnegative and therefore $A_{\frac{%
n-4}{2}}$ and $A_{\frac{n-2}{2}}$ are diagonalizable companion realizing
matrices. Thus, after applying $\frac{n-4}{2}$ times \v{S}migoc's glue to
the matrices $A_{1},\ldots ,A_{\frac{n-2}{2}}$, we obtain an $n$-by-$n$
diagonalizable nonnegative matrix $A$ with spectrum $\Lambda _{c}.$ Thus $%
\Lambda _{c}$ is $\mathcal{DR}.$\newline
To obtain an $n$-by-$n$ nonnegative matrix $A$ with spectrum $\Lambda _{c}$
and a non-diagonal JCF $\mathbf{J}$, we take $\Lambda _{c}=\Lambda _{1}\cup
\cdots \cup \Lambda _{t}$ with auxiliary lists $\Gamma _{k}$ being the
spectrum of a companion matrix $A_{k}$ with JCF as a sub-matrix of $\mathbf{%
J.}$ Next we need to prove that all $A_{k}$ are nonnegative. To do that, we
compute $b_{2}(A_{t})$ from the formula in (\ref{Fb2}), where $A_{t}$ (with $%
\Gamma _{t}$ containing $-c)$ is the last diagonalizable matrix in the
diagonalizable decomposition of $\Lambda _{c}.$ From (\ref{C1}) and (\ref{C2}%
) $b_{2}(A_{t})\geq 0.$ From Lemma \ref{JA} all $b_{2}(A_{k}),$ $k=1,\ldots
,t-1,$ are nonnegative. Therefore the glue of matrices $A_{k}$ gives an $n$%
-by-$n$ nonnegative matrix $A$ with the desired JCF $\mathbf{J.}$
\end{proof}

\begin{example}
$i)$ $\Lambda =\{6,-1\pm 3i,-1\pm 3i\}$ is realizable by the companion matrix%
\begin{equation*}
C=\left[ 
\begin{array}{ccccc}
0 & 0 & 0 & 0 & 600 \\ 
1 & 0 & 0 & 0 & 140 \\ 
0 & 1 & 0 & 0 & 104 \\ 
0 & 0 & 1 & 0 & 0 \\ 
0 & 0 & 0 & 1 & 2%
\end{array}%
\right] ,
\end{equation*}%
with a non-diagonal JCF. We do not know whether $\Lambda $ has a
diagonalizable realization. Then, consider the list 
\begin{equation*}
\Lambda _{c}=\{6,-c,-1\pm 3i,-1\pm 3i\}.
\end{equation*}%
Condition (\ref{C1}) is satisfied and from (\ref{C2}) we have $1\leq c\leq
2. $ Then for $c=2,$ we have that 
\begin{equation*}
\Gamma _{1}=\{6,-1\pm 3i\},\text{ \ }\Gamma _{2}=\{4,-2,-1\pm 3i\}
\end{equation*}%
are the spectrum of diagonalizable nonnegative companion matrices 
\begin{equation*}
A_{1}=\left[ 
\begin{array}{ccc}
0 & 0 & 60 \\ 
1 & 0 & 2 \\ 
0 & 1 & 4%
\end{array}%
\right] ,\text{ and \ }A_{2}=\left[ 
\begin{array}{cccc}
0 & 0 & 0 & 80 \\ 
1 & 0 & 0 & 36 \\ 
0 & 1 & 0 & 2 \\ 
0 & 0 & 1 & 0%
\end{array}%
\right] ,
\end{equation*}%
respectively. Then, from \v{S}migoc's glue we obtain a diagonalizable
nonnegative matrix with spectrum $\Lambda _{c}.$ It is clear that, from the
characteristic polynomial associated to $\Lambda _{c},$ $\Lambda _{c}$ has
also a companion realization $A_{3},$ 
\begin{equation*}
A_{3}=\left[ 
\begin{array}{cccccc}
0 & 0 & 0 & 0 & 0 & 1200 \\ 
1 & 0 & 0 & 0 & 0 & 880 \\ 
0 & 1 & 0 & 0 & 0 & 348 \\ 
0 & 0 & 1 & 0 & 0 & 104 \\ 
0 & 0 & 0 & 1 & 0 & 4 \\ 
0 & 0 & 0 & 0 & 1 & 0%
\end{array}%
\right] ,
\end{equation*}%
with a JCF with blocks of maximum size. Note that the formula in (\ref{Fb2})
gives $(k=2,$ $p=1,$ $t=2)$ $b_{2}(A_{2})=2,$ while $(k=3,$ $p=2)$ gives $%
b_{2}(A_{3})=4.$ Therefore $\Lambda _{c}$ is $\mathcal{UR}$. Observe that if 
$1\leq $ $c\leq 2,$ then 
\begin{equation*}
\Lambda _{c}=\{6,-c,-1\pm 3i,-1\pm 3i\}
\end{equation*}%
is also $\mathcal{UR}$.\newline
$ii)$ Consider the list $\Lambda =\{17,-3\pm 9i,-3\pm 9i\}.$ Since $%
s_{1}(\Lambda )=5$ and $s_{2}(\Lambda )=1,$ $\Lambda $ is not realizable.
From condition (\ref{C2}), $\frac{24}{5}\leq c\leq 5.$ Then for $c=5,$ 
\begin{equation*}
\Lambda _{c}=\{17,-5,-3\pm 9i,-3\pm 9i\}\text{ }
\end{equation*}%
is $\mathcal{UR}$. In fact, 
\begin{equation*}
\Gamma _{1}=\{17,-3\pm 9i\}\text{ and }\Gamma _{2}=\{11,-5,-3\pm 9i\}
\end{equation*}%
are the spectrum of diagonalizable nonnegative companion matrices, which
from \v{S}migoc's glue give rise to a diagonalizable nonnegative matrix with
spectrum $\Lambda _{c}.$ From the characteristic polynomial associated to $%
\Lambda _{c}$ we obtain a nonnegative companion matrix with spectrum $%
\Lambda _{c}$ and non-diagonal JCF. Therefore, $\Lambda _{c}$ is $\mathcal{UR%
}.$
\end{example}

Observe that in Theorem \ref{negc}, in spite that $s_{1}(\Lambda )>0,$ if $%
s_{1}(\Lambda )$ is small enough, there are lists $\Lambda _{c},$ which are
not $\mathcal{UR}$ or we cannot to prove they are from our procedure.
However, from Theorem \ref{negc} we may compute a Perron eigenvalue $\lambda
,$ which guarantees that for a family of lists $\Lambda _{c},$ with $c>0$
and $n\geq 6,$ $\Lambda _{c}$ will be $\mathcal{UR}$. Then, the following
result characterizes a family of left half-plane lists, which are $\mathcal{%
UR}$.

\begin{corollary}
\label{corR}The left half-plane lists of the family 
\begin{equation*}
\Lambda _{c}={\Huge \{}\frac{1}{2a}((2n-7)a^{2}+b^{2}),-c,\underset{(n-2)%
\text{ complex numbers}}{\underbrace{-a\pm bi,\ldots ,-a\pm bi}}{\Huge \}},
\end{equation*}%
with $0<\sqrt{3}a<b,$ $0<c\leq \frac{b^{2}-3a^{2}}{2a},$ are universally
realizable..
\end{corollary}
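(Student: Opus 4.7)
The plan is to apply Theorem \ref{negc} directly with the distinguished choice
$$\lambda = \frac{(2n-7)a^{2}+b^{2}}{2a},$$
and verify that the hypotheses (\ref{C1}) and (\ref{C2}) reduce exactly to the stated constraints on $a,b,c$. The content of this corollary is not new realizability machinery; it is the observation that this particular $\lambda$ is tuned so that the lower bound on $c$ in (\ref{C2}) collapses to zero, producing a clean one-parameter family.

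First I would verify (\ref{C1}): substituting the given $\lambda$, the difference $\lambda - \frac{(2n-11)a^{2}+b^{2}}{2a}$ reduces to $\frac{4a^{2}}{2a} = 2a > 0$, so (\ref{C1}) holds with slack exactly $2a$. Next I would compute
$$\lambda - (n-2)a = \frac{(2n-7)a^{2}+b^{2} - 2(n-2)a^{2}}{2a} = \frac{b^{2}-3a^{2}}{2a},$$
which is positive under the assumption $\sqrt{3}\,a < b$ and matches the stated upper bound on $c$. As a by-product, this gives $s_{1}(\Lambda) = \lambda - (n-2)a > 0$ for the list $\Lambda = \Lambda_{c}\setminus\{-c\}$, so the hypothesis $s_{1}(\Lambda)>0$ of Theorem \ref{negc} is automatic.

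The key simplification is in the lower bound of (\ref{C2}). Using $2a\lambda = (2n-7)a^{2}+b^{2}$, a direct calculation shows
$$2a(na-\lambda) + b^{2} - 7a^{2} = \bigl(2na^{2} - (2n-7)a^{2} - b^{2}\bigr) + b^{2} - 7a^{2} = 0.$$
Hence the lower bound on $c$ in (\ref{C2}) is $0$, and (\ref{C2}) becomes $0 \leq c \leq \frac{b^{2}-3a^{2}}{2a}$, which is exactly the stated range. With both (\ref{C1}) and (\ref{C2}) verified, Theorem \ref{negc} yields that $\Lambda_{c}$ is universally realizable. There is no genuine obstacle here: once the right $\lambda$ is spotted, the corollary is a purely algebraic consequence, and the only ingenuity is recognizing that this $\lambda$ is the exact threshold at which the constraint on $c$ degenerates to the natural interval $\bigl(0,(b^{2}-3a^{2})/(2a)\bigr]$.
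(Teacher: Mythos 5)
Your proof is correct and follows exactly the paper's route: substitute $\lambda = \frac{(2n-7)a^{2}+b^{2}}{2a}$ into Theorem \ref{negc} and check that conditions (\ref{C1}) and (\ref{C2}) reduce to the stated constraints. The paper leaves the algebra implicit ("it is clear that..."), whereas you carry it out explicitly, in particular showing the lower bound in (\ref{C2}) vanishes identically for this $\lambda$; the substance is the same.
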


\begin{proof}
It is clear that for $\lambda =\frac{1}{2a}\left( (2n-7)a^{2}+b^{2}\right) ,$
conditions (\ref{C1}) and (\ref{C2}) in Theorem \ref{negc} are satisfied.
Moreover, from $0<\sqrt{3}a<b,$ $\lambda -(n-2)a=\frac{b^{2}-3a^{2}}{2a}>0.$
\end{proof}

\bigskip

Then, from Corollary \ref{corR} some left half-plane lists that are UR are:%
\begin{eqnarray*}
i)\ \Lambda _{c} &=&{\Huge \{}\frac{2n-3}{2}a,-c,\underset{(n-2)\text{
complex numbers}}{\underbrace{-a\pm 2ai,\ldots ,-a\pm 2ai}}{\Huge \}},\text{
with }0<c\leq \frac{a}{2} \\
&& \\
ii)\ \Lambda _{c} &=&{\Huge \{}(n+1)a,-c,\underset{(n-2)\text{ complex
numbers}}{\underbrace{-a\pm 3ai,\ldots ,-a\pm 3ai}}{\Huge \}},\text{ with }%
0<c\leq 3a \\
&&. \\
iii)\text{ }\Lambda _{c} &=&{\Huge \{}\frac{2n+9}{2}a,-c,\underset{(n-2)%
\text{ complex numbers}}{\underbrace{-a\pm 4ai,\ldots ,-a\pm 4ai}}{\Huge \}},%
\text{ with }0<c\leq \frac{13}{2}a \\
&& \\
iv)\text{ }\Lambda _{c} &=&{\Huge \{}\frac{8n-3}{8}a,-c,\underset{(n-2)\text{
complex numbers}}{\underbrace{-a\pm \frac{5}{2}ai,\ldots ,-a\pm \frac{5}{2}ai%
}}{\Huge \}},\text{ with }0<c\leq \frac{13}{8}a,
\end{eqnarray*}%
and so on.

\bigskip

Observe that in Corollary \ref{corR}, if $c$ is strictly less than its upper
bound, then $\Lambda _{c},$ as we have seen, can be realized by a
diagonalizable matrix with its last column being positive. Then, from the
extension in \cite{Collao}, $\Lambda _{c}$ is $\mathcal{UR}$.

\section{The merge of spectra}

Let $\Gamma _{1}=\{\lambda _{1},\lambda _{2},\ldots ,\lambda _{n}\}$ and $%
\Gamma _{2}=\{\mu _{1},\mu _{2},\ldots ,\mu _{m}\}$ be lists of complex
numbers. In \cite{Johnson5} the authors define the concept of the \textit{%
merge of the spectra} $\Gamma _{1}$ with\textit{\ }$\Gamma _{2}$ as%
\begin{equation*}
\Gamma =\{\lambda _{1}+\mu _{1},\lambda _{2},\ldots ,\lambda _{n},\mu
_{2},\ldots ,\mu _{m}\},
\end{equation*}%
and prove that if $\Gamma _{1}$ and\textit{\ }$\Gamma _{2}$ are
diagonalizably ODP realizable, then the merge $\Gamma _{1}$\textit{\ with }$%
\Gamma _{2},$ is also diagonalizably ODP realizable, and therefore from the
extension in \cite{Johnson4}, $\Gamma $ is $\mathcal{UR}$. Here we set a
similar result as follows:

\begin{theorem}
\label{merge}Let $\Gamma _{1}=\{\lambda _{1},\lambda _{2},\ldots ,\lambda
_{n}\},$ $\lambda _{1}>\left\vert \lambda _{i}\right\vert ,$ $i=2,\ldots ,n,$
be the spectrum of a diagonalizable nonnegative $n$-by-$n$ matrix $A\in 
\mathcal{CS}_{\lambda _{1}}$ with its last column being positive. Let $%
\Gamma _{2}=\{\mu _{1},\mu _{2},\ldots ,\mu _{m}\},$ $\mu _{1}>\left\vert
\mu _{i}\right\vert ,$ $i=2,\ldots ,m,$ be the spectrum of a diagonalizable
nonnegative $m$-by-$m$ matrix $B\in \mathcal{CS}_{\mu _{1}}$ with its last
column being positive. Then 
\begin{equation*}
\Gamma =\{\lambda _{1}+\mu _{1},\lambda _{2},\ldots ,\lambda _{n},\mu
_{2},\ldots ,\mu _{m}\}
\end{equation*}%
is universally realizable..
\end{theorem}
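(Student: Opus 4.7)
The plan is to build a single diagonalizable nonnegative matrix $C\in\mathcal{CS}_{\lambda_1+\mu_1}$ whose spectrum equals the merge $\Gamma$ and whose last column is strictly positive; the extension of Minc's theorem from \cite{Collao} then furnishes a diagonalizable positive matrix similar to $C$, so $\Gamma$ is $\mathcal{UR}$.

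I begin by assuming, without loss of generality, $\lambda_1\geq\mu_1$ (the merge is symmetric in $A$ and $B$). Using that $B\in\mathcal{CS}_{\mu_1}$ has $\mathbf{e}$ as right Perron eigenvector, I apply Brauer's rank-one perturbation $\widetilde B = B + \mathbf{e}\mathbf{q}^T$, choosing a nonnegative vector $\mathbf{q}$ with $\mathbf{q}^T\mathbf{e}=\lambda_1$ and $m$-th coordinate $q_m = \lambda_1 - b_{mm}$. The inequality $b_{mm}\leq\mu_1\leq\lambda_1$, which holds because every row of the nonnegative matrix $B$ sums to $\mu_1$, makes $q_m\geq 0$, and the remaining mass $b_{mm}$ is split arbitrarily over $q_1,\ldots,q_{m-1}$. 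The resulting $\widetilde B$ is nonnegative, lies in $\mathcal{CS}_{\lambda_1+\mu_1}$, has spectrum $\{\lambda_1+\mu_1,\mu_2,\ldots,\mu_m\}$, retains a strictly positive last column (only nonnegative numbers are added to positive ones), and has $(m,m)$-entry exactly $\lambda_1$. Writing $B=PDP^{-1}$ with the first column of $P$ equal to $\mathbf{e}$ and inspecting the upper-triangular matrix $D+\mathbf{e}_1(\mathbf{q}^TP)$ confirms that $\widetilde B$ stays diagonalizable: the new dominant eigenvalue $\lambda_1+\mu_1$ is simple and for each repeated $\mu_i$ the geometric multiplicity still matches the algebraic one.

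Next I invoke \v{S}migoc's glue (Theorem \ref{Smigoc}) with $\widetilde B$ as the first matrix (its corner scalar is $c=\lambda_1$) and $A$ as the second matrix. The simple Perron $\lambda_1$ of $A$ supplies the required $1$-by-$1$ Jordan block with right eigenvector $\mathbf{s}=\mathbf{e}$ and left Perron eigenvector $\mathbf{t}$, normalized by $\mathbf{t}^T\mathbf{e}=1$. Writing $\widetilde B=\begin{pmatrix}\widetilde B_1 & \widetilde{\mathbf{a}}\\ \widetilde{\mathbf{b}}^T & \lambda_1\end{pmatrix}$, the glued matrix
\begin{equation*}
C=\begin{pmatrix}\widetilde B_1 & \widetilde{\mathbf{a}}\mathbf{t}^T\\ \mathbf{e}\widetilde{\mathbf{b}}^T & A\end{pmatrix}
\end{equation*}
is nonnegative, a direct row-sum check places it in $\mathcal{CS}_{\lambda_1+\mu_1}$, and its Jordan form equals the direct sum of those of $\widetilde B$ and $I(A)$, so $C$ is diagonalizable with spectrum exactly $\Gamma$.

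The point I expect to be the main obstacle is showing that the last column of $C$ is strictly positive. Its lower block is the last column of $A$, positive by hypothesis. Its upper block is $t_n\widetilde{\mathbf{a}}$, where $\widetilde{\mathbf{a}}>0$ because the last column of $B$ is positive and only a nonnegative $q_m$ has been added. To check $t_n>0$ I read the identity $A^T\mathbf{t}=\lambda_1\mathbf{t}$ in coordinate $n$: $\lambda_1 t_n=\sum_j A_{jn}t_j$, whose right side is strictly positive since $A_{jn}>0$ for every $j$ (positive last column of $A$) and $\mathbf{t}\geq 0$ is a nonzero Perron eigenvector of $A$. Hence $C$ realizes $\Gamma$, is diagonalizable and nonnegative, lies in $\mathcal{CS}_{\lambda_1+\mu_1}$, and has a strictly positive column; the extension in \cite{Collao} then yields that $\Gamma$ is universally realizable.
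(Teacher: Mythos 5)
Your proof is correct, and while it uses the same three ingredients as the paper (\v{S}migoc's glue, a Brauer rank-one perturbation $M+\mathbf{e}\mathbf{q}^T$, and the extension of Minc's theorem from \cite{Collao}), it assembles them in a genuinely different order. The paper first applies the Collao extension twice, turning $A$ and $B$ into diagonalizable positive matrices $A'$ and $B'$; it then perturbs both, choosing $A_1=A'+\mathbf{e}[0,\ldots,0,\mu_1]$ and $B_1=B'+\mathbf{e}[\alpha_n,0,\ldots,0]$ (with $\alpha_n$ the last diagonal entry of $A'$) precisely so that the corner of $A_1$ equals the Perron value $\mu_1+\alpha_n$ of $B_1$, and glues to get a diagonalizable positive $C$, so Minc's original result finishes the job. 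You instead perturb only $B$, leaving $A$ untouched, glue immediately, and invoke \cite{Collao} once at the very end on the glued nonnegative matrix $C\in\mathcal{CS}_{\lambda_1+\mu_1}$ with positive last column. Your route is a bit more economical (one perturbation, one appeal to \cite{Collao}) and is self-contained in the row-stochastic world; the price is the without-loss $\lambda_1\geq\mu_1$ needed to keep $q_m\geq0$, and the extra check that $t_n>0$, which the paper avoids by working with strictly positive $A'$ and $B'$. Both arguments hinge on the same diagonalizability observation for $M+\mathbf{e}\mathbf{q}^T$ (new Perron eigenvalue disjoint from the rest so that Brauer's update does not break diagonalizability), which you verify and the paper leaves implicit; your explicit check of $t_n>0$ via $\lambda_1 t_n=\sum_j A_{jn}t_j$ with $A_{jn}>0$ is a clean way to close the one genuine gap that your more parsimonious route opens.
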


\begin{proof}
Let $A\in \mathcal{CS}_{\lambda _{1}}$ be a diagonalizable nonnegative
matrix with spectrum $\Gamma _{1}$ and with its last column being positive.
Then $A$ is similar to a diagonalizable positive matrix $A^{\prime }.$ If $%
\alpha _{1},\ldots ,\alpha _{n}$ are the diagonal entires of $A^{\prime },$
then%
\begin{equation*}
A_{1}=A^{\prime }+\mathbf{e}[0,0,\ldots ,\mu _{1}]=\left[ 
\begin{array}{cc}
A_{11}^{\prime } & \mathbf{a} \\ 
\mathbf{b}^{T} & \alpha _{n}+\mu _{1}%
\end{array}%
\right] \in \mathcal{CS}_{\lambda _{1}+\mu _{1}}
\end{equation*}%
is diagonalizable positive with spectrum $\{\lambda _{1}+\mu _{1},\lambda
_{2},\ldots ,\lambda _{n}\}$ and diagonal entries $\alpha _{1},\alpha
_{2},\ldots ,\alpha _{n}+\mu _{1}.$ Let $B\in \mathcal{CS}_{\mu _{1}}$ be a
diagonalizable nonnegative matrix with spectrum $\Gamma _{2}$ and with its
last column being positive. Then $B$ is similar to a diagonalizable positive
matrix $B^{\prime }$ and%
\begin{equation*}
B_{1}=B^{\prime }+\mathbf{e}[\alpha _{n},0,\ldots ,0]
\end{equation*}%
is diagonalizable positive with spectrum $\{\mu _{1}+\alpha _{n},\mu
_{2},\ldots ,\mu _{m}\}.$ Now, by applying the \v{S}migoc's glue to matrices 
$A_{1}$ and $B_{1}$, we obtain a diagonalizable positive matrix $C$ with
spectrum $\Gamma .$ Hence, $\Gamma $ is $\mathcal{UR}$
\end{proof}

\bigskip

Theorem \ref{merge} is useful to decide, in many cases, about the universal
realizability of left half-plane list of complex numbers, as for instance:

\begin{example}
Is the list 
\begin{equation*}
\Gamma ={\LARGE \{}30\text{$,-1$},-5,-1\pm 3i,-1\pm 3i,-1\pm 3i,-3\pm
9i,-3\pm 9i{\LARGE \}}\text{ }\mathcal{UR}\text{?}
\end{equation*}%
Observe that from the results in Section $4,$ 
\begin{eqnarray*}
\Gamma _{1} &=&\{21,-5,-3\pm 9i,-3\pm 9i\}. \\
\Gamma _{2} &=&\{9,-1,-1\pm 3i,-1\pm 3i,-1\pm 3i\}
\end{eqnarray*}%
are the spectrum of a diagonalizably nonnegative matrix with constant row
sums and a positive column (the last one). Then, they are similar to
diagonalizable positive matrices and from Theorem \ref{merge}, the merge $%
\Gamma $ is also the spectrum of a diagonalizable positive matrix.
Therefore, $\Gamma $ is $\mathcal{UR}$.
\end{example}

\end{document}